\newtheorem{theorem}{Theorem}[section]
\newtheorem{corollary}[theorem]{Corollary}
\newtheorem{lemma}[theorem]{Lemma}
\newtheorem{proposition}[theorem]{Proposition}
\newtheorem{question}[theorem]{Question}
\theoremstyle{plain}
\newtheorem{remark}[theorem]{Remark}
\numberwithin{equation}{section}
\begin{document}

\title{Diagonals of separately continuous maps with values in box products}
\author{Olena Karlova}
\author{Volodymyr Mykhaylyuk}

\maketitle

 \begin{abstract}
   We prove that if $X$ is a paracompact connected space and $Z=\prod_{s\in S}Z_s$ is a product of a family of equiconnected metrizable spaces endowed with the box topology, then for every  Baire-one map  $g:X\to Z$  there exists a separately continuous map  $f:X^2\to Z$ such that $f(x,x)=g(x)$ for all $x\in X$.
 \end{abstract}

\section{Introduction}
Let $X$, $Y$ be topological spaces  and $C(X,Y)=B_0(X,Y)$ be the collection of all continuous maps between $X$ and $Y$.  For $n\geq 1$ we say that a map $f:X\to Y$ belongs to {\it the $n$-th Baire class} if $f$ is a pointwise limit of a sequence of maps $f_k:X\to Y$ from the $(n-1)$-th Baire class. By ${\rm B}_n(X,Y)$ we denote the collection of all maps of the $n$-th Baire class between $X$ and $Y$.

For a map $f:X\times Y\to Z$ and a point $(x,y)\in X\times Y$ we write $f^x(y)=f_y(x)=f(x,y)$. By $CB_n(X\times Y,Z)$ we denote the collection of all mappings $f:X\times Y\to Z$ which are continuous with respect to the first variable and belongs to the $n$-th Baire class with respect to the second one. If $n=0$, then we use the symbol $CC(X\times Y,Z)$ for the class of all separately continuous maps. Now let $CC_0(X\times Y,Z)=CC(X\times Y,Z)$ and for $n\ge 1$ let $CC_n(X\times Y,Z)$ be the class of all maps $f:X\times Y\to Z$ which are pointwise limits of a sequence of maps from $CC_{n-1}(X\times Y,Z)$.

Let $f:X^2\to Y$ be a map. Then the map $g:X\to Y$ defined by $g(x)=f(x,x)$ is called {\it a diagonal of $f$.}

Investigations of diagonals of separately continuous functions $f:X^n\to\mathbb R$ were started in classical works of R.~Baire \cite{Baire}, H.~Lebesgue \cite{Leb1, Leb2} and H. Hahn \cite{Ha} who proved that diagonals of separately continuous functions of $n$ real variables are exactly the functions of the $(n-1)$-th Baire class. On other hand, separately continuous mappings with valued in equiconnected spaces intensively studied starting from \cite{B}.
A brief survey of further developments of these investigations can be found in \cite{KMS1}. At the same time little is known about the possibility of extension of a ${\rm B}_1$-function from the diagonal of $X^2$  to a separately continuous function on the whole $X^2$ when a range space is not metrizable. We know only one paper in this direction~\cite{KMS1} where maps are considered with values in a space $Z$ from a wide class of spaces which contains metrizable equiconnected spaces and strict inductive limits of sequences of closed locally convex metrizable subspaces.

Here we continue investigations in this direction and study maps with values in products of topological spaces endowed with the box topology. The main result of our paper is the following.
\begin{theorem}
  Let $X$ be a paracompact connected space, $Z=\prod_{s\in S}Z_s$ be a product of a family of equiconnected metrizable spaces endowed with the box topology and $g:X\to Z$ be a Baire-one map. Then there exists a separately continuous map  $f:X^2\to Z$ with the diagonal $g$.
\end{theorem}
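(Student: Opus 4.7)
My approach is to adapt the construction of \cite{KMS1} (which handles a single metrizable equiconnected target) to the box-product setting, carried out coordinate-wise, and to exploit the fact that continuity into the box topology automatically supplies a strong coordinate-uniform control. First I would write $g=\lim_n g_n$ pointwise with each $g_n:X\to Z$ continuous in the box topology, and fix a bounded metric $d_s$ on each $Z_s$. The key observation is that box-continuity of $g_n$ at a point $x$ means: for every family $(\varepsilon_s)_{s\in S}$ of positive reals there is a neighborhood $V\ni x$ such that $d_s(\pi_s g_n(y),\pi_s g_n(x))<\varepsilon_s$ for all $y\in V$ and all $s\in S$ simultaneously. This ``coordinate-uniform continuity'' is what will allow one to control infinitely many coordinates at once inside an interpolation.

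Using paracompactness of $X$, the next step is to build, for each $n$, a locally finite open cover $\mathcal U_n$ of $X$ together with a subordinate partition of unity $\{\varphi_{n,\alpha}\}_\alpha$, arranged so that on each member of $\mathcal U_n$ the successive approximations $g_n$ and $g_{n+1}$ are uniformly close coordinate-wise (the freedom to do this rests on the strong coordinate-uniform continuity mentioned above). Then I would define $f:X^2\to Z$ coordinate-wise by a Hahn-type iterated equiconnecting interpolation of the same shape as in \cite{KMS1},
\[
\pi_s f(x,y)=\lim_{k\to\infty}\Lambda_{k,s}(x,y),
\]
where each $\Lambda_{k,s}$ is built recursively from $\pi_s g_k(x)$, $\pi_s g(y)$, and the weights $\varphi_{n,\alpha}(y)$ via the coordinate equiconnecting map $\lambda_s$. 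The diagonal identity $f(x,x)=g(x)$ then follows, as in the metric case, from the fact that at $y=x$ the weights collapse each step of the interpolation onto $g_s(x)$.

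The main obstacle is verifying separate continuity of $f$ in the box topology. For continuity in the first variable with $y_0$ fixed, local finiteness of the partitions ensures that $f(\cdot,y_0)$ is locally a finite composition of maps of the form $\lambda_s(\cdots,\pi_s g_n(\cdot),\cdots)$; the coordinate-uniform continuity of each of the finitely many relevant $g_n$'s then yields, for every prescribed open box $\prod_s U_s\ni f(x_0,y_0)$, a single neighborhood $V\ni x_0$ with $f(V\times\{y_0\})\subseteq\prod_s U_s$, provided the (coordinate-dependent) moduli of continuity of the $\lambda_s$'s are absorbed into the choice of the family $(\varepsilon_s)$ fed into box-continuity of the $g_n$'s. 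Continuity in the second variable is the more delicate point, since $g$ itself is only Baire-one: one must arrange the interpolation so that the $g(y)$-argument is damped by a weight that vanishes outside a controlled set, so that a small change in $y$ either leaves the $g(y)$-contribution inside a prescribed box or else is absorbed by a vanishing $\lambda_s$-weight. Verifying that this can be done simultaneously on all coordinates is the main technical hurdle, and is precisely where the box-topology assumption combined with the metrizable equiconnected structure on each factor is essential.
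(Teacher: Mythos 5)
Your outline has the right general shape (approximate $g$ pointwise, interpolate consecutive approximants via the equiconnecting maps over a partition-of-unity/neighborhood-of-the-diagonal scheme), but it leaves the two places where the actual work lies unresolved, and one intermediate claim is false as stated. First, you never use connectedness of $X$, yet it is essential: the coordinatewise map $\lambda=(\lambda_s)_{s\in S}$ is \emph{not} continuous on the full box product $\Box_{s\in S}Z_s$ (the parameter $t$ is shared by all coordinates, so one cannot intersect infinitely many neighborhoods of $t$), and the interpolation only makes sense after one shows that $g$ and all its continuous approximants take values in a single small box product $\sigma(a)$. That reduction is exactly what connectedness buys (each continuous $g_n$ has connected image, hence image in some $\sigma(a_n)$, and pointwise convergence forces these to stabilize), and it is the first step of the paper's argument. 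Second, your assertion that paracompactness lets you arrange, on each member of a locally finite cover, that ``the successive approximations $g_n$ and $g_{n+1}$ are uniformly close coordinate-wise'' does not follow from anything you have: the $g_n$ converge to $g$ only pointwise, and no refinement of the cover can convert pointwise convergence of a fixed sequence into local uniform closeness of consecutive terms. Coordinate-uniform continuity of each \emph{individual} $g_n$ (your key observation) is true but irrelevant to this; the obstruction is non-uniformity across $n$ and across points $x$.

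What actually has to be proved --- and what the paper spends most of its length on --- is a replacement for that false uniformity: one must construct \emph{modified} continuous approximants $g_n$ together with functionally open neighborhoods $W_n$ of the diagonal such that $g_n(x_n)\to g(x)$ in the box topology whenever $(x_n,x)\in W_n$. In the box setting this forces control of the finite set of coordinates on which $h_n(x)$ eventually differs from the base point $a$, and that set varies with $x$. The paper handles this by stratifying $X$ into functionally $F_\sigma$ sets $A_k$ on which the eventual ``support size'' and threshold index are constant, showing the further decomposition by the actual support set $R$ is clopen in each $A_k$, separating these pieces by discrete families of functionally open sets (collectionwise normality of paracompact spaces), and then truncating $h_n$ to the coordinates in $R$ and damping it to $a$ via $\lambda$ near the boundary of each piece. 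Only after this does the general diagonal-extension theorem of Mykhaylyuk--Sobchuk--Fotij apply. Your proposal correctly identifies continuity in the second variable as the hard point but does not supply this mechanism, so as it stands the proof is incomplete at precisely the step that distinguishes the box-product case from the single metrizable target case.
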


\section{The case of countable box-products}

We consider a family $(X_s)_{s\in S}$ of topological spaces and put $X_S=\prod_{s\in S}X_s$.
For  $x\in X_S$ and $T\subseteq S$ we define $x|_T$ as the point from $X_T$ with coordinates $(y_s)_{s\in T}$ such that $y_s=x_s$ for all $s\in T$.

The product $X_S$ endowed with the {\it box topology} generated by the family of all boxes $\prod_{s\in S}G_s$, where $(G_s)_{s\in S}$ is a family of open subsets of $X_s$ for every $s\in S$, is called {\it the box product} and is denoted by $\Box_{s\in S} X_s$.

For a fixed point $a=(a_s)_{s\in S}\in X_S$ we consider the set
$$
\sigma(a)=\{(x_s)_{s\in S}: \{s\in S:x_s\ne a_s\} \,\,\mbox{is finite}\}.
$$
If $\sigma(a)$ is endowed with the box topology, then $\sigma(a)$ is called {\it the small box product} of $(X_s)_{s\in S}$ and is denoted by $\boxdot_{s\in S}X_s$ or simply by $\boxdot$ when no confusion will arise.

Further, for all $n\in\omega$ we put
\begin{gather*}
   \boxdot_n=\{(x_s)_{s\in S}:|\{s\in S:x_s\ne a_s\}|=n\},\\
    \boxdot_{\le n}=\bigcup_{k\le n}\boxdot_k
\end{gather*}
and for a finite subset $T\subseteq S$ let
\begin{gather*}
    \boxdot_{T}=\{(x_s)_{s\in S}:x_s=a_s \,\, \Leftrightarrow s\in S\setminus T\}.
\end{gather*}
Obviously,
\begin{gather*}
  \boxdot=\bigsqcup_{n\in\omega}\boxdot_n \,\,\,\mbox{and}\,\,\, \boxdot_n=\bigsqcup_{T\subseteq S, |T|=n} \boxdot_{T}.
\end{gather*}

 The facts below follow  easily from the definition of the box topology and we omit their proof. For another properties of the box topology see \cite{Williams}.
\begin{proposition}\label{properties}
Let $(X_s:s\in S)$ be a family of topological spaces.
\begin{enumerate}
\item Each $\boxdot_{s\in S}X_s$ is a closed subspace of $\Box_{s\in S}X_s$ whenever each $X_s$ is a $T_1$-space.

\item The space $\boxdot_{T}$ is homeomorphic to the finite product $\prod_{t\in T}X_t$ for any finite set $T\subseteq S$.

\item Each $\boxdot_{T}$ is clopen in $\boxdot_n$, where $n=|T|$.

\item Let $X_s$ be a $T_1$-space for every $s\in S$ and $(x_n)_{n\in\omega}$ be a sequence of points from $X=\prod_{s\in S}X_s$. Then $(x_n)_{n\in\omega}$ converges to a point $x\in X$ in the box-topology if and only if $(x_n)_{n\in\omega}$ converges to $x$ in the product topology and there
     exists a number $k\in\omega$ and a finite set $T\subseteq S$ such that $\{x_n:n\ge k\}\subseteq \boxdot_{T}$, where $\boxdot_{T}$ is the subspace of $\boxdot=\sigma(x)$.
\end{enumerate}
\end{proposition}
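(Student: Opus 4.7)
The plan is to verify each of the four items directly from the definition of the box topology; parts (1)--(3) are quick and (4) is the only one with real content.

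For (1)--(3) I would proceed as follows. Given $z\notin\sigma(a)$ the coordinate set $N=\{s:z_s\neq a_s\}$ is infinite; using the $T_1$ hypothesis so that each $X_s\setminus\{a_s\}$ is open, the box with factor $X_s\setminus\{a_s\}$ on $N$ and $X_s$ elsewhere is an open neighbourhood of $z$ disjoint from $\sigma(a)$, proving (1). For (2), projection onto the coordinates indexed by $T$ identifies $\boxdot_T$ with $\prod_{t\in T}X_t$ (more precisely with $\prod_{t\in T}(X_t\setminus\{a_t\})$), and because only finitely many coordinates are non-fixed, a box in $\Box_{s\in S}X_s$ restricts to a box of this product; the two topologies on $\boxdot_T$ therefore agree. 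For (3), the box with $U_s=X_s\setminus\{a_s\}$ on $T$ and $U_s=X_s$ elsewhere meets $\boxdot_n$ exactly in $\boxdot_T$, giving openness in $\boxdot_n$; closedness follows because the complement $\boxdot_n\setminus\boxdot_T$ is the union of the other pieces $\boxdot_{T'}$ with $|T'|=n$, which are open by the same recipe.

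The only substantive item is (4). The ``if'' direction is straightforward: since each $x_n$ agrees with $x$ off the common finite set $T$, any open box $\prod U_s$ about $x$ is entered as soon as the finitely many coordinates in $T$ converge into their respective $U_t$. For the ``only if'' direction I would argue by contradiction: if the supports $T_n=\{s:(x_n)_s\neq x_s\}$ are not eventually contained in a common finite set, then by induction one can extract indices $n_k\to\infty$ together with coordinates $s_k\in T_{n_k}\setminus\bigcup_{m<n_k}T_m$, which forces the $s_k$ to be pairwise distinct. Using the $T_1$ axiom, set $U_{s_k}=X_{s_k}\setminus\{(x_{n_k})_{s_k}\}$ and $U_s=X_s$ otherwise; the resulting open box about $x$ excludes the whole subsequence $(x_{n_k})$, contradicting box convergence.

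The main (modest) obstacle is the combinatorial extraction in (4): the $n_k$ must be increasing and the $s_k$ pairwise distinct, which the ``new-coordinate'' selection achieves in one shot. Everything else reduces to a direct computation with open boxes and the $T_1$ axiom.
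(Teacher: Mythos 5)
The paper gives no argument for this proposition at all (it is dismissed as following ``easily from the definition''), so you are supplying a proof where none exists; your overall plan is the natural one. Parts (1)--(3) and the ``if'' half of (4) are correct. Two harmless remarks: in (2) the canonical identification is with $\prod_{t\in T}(X_t\setminus\{a_t\})$, as you note, so it is the paper's phrasing that is loose; and in (4) your ``only if'' argument lands the tail in $\bigsqcup_{T'\subseteq T}\boxdot_{T'}$ rather than in $\boxdot_T$ literally (whose definition uses ``$\Leftrightarrow$''), which is exactly the version the paper actually uses later (Propositions~\ref{prop:cont_with_neigh} and~\ref{pr:7.3}), so prove that version.

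There is, however, a genuine gap in the extraction step of the ``only if'' half of (4). You choose $s_k\in T_{n_k}\setminus\bigcup_{m<n_k}T_m$, but that set can be empty at every admissible index even when no tail of the supports lies in a common finite set. Nothing forces the terms $x_n$ to lie in $\sigma(x)$, so some $T_m$ may be infinite: for instance $T_1=S$ and $T_n=\{c_n\}$ with the $c_n$ pairwise distinct for $n\ge 2$ satisfies the negated conclusion, yet $T_n\setminus\bigcup_{m<n}T_m=\emptyset$ for all $n\ge 2$; the sequence $x_n=(1/n,1/n,\dots)$ in $[0,1]^{\omega}$ exhibits the same failure. The repair is easy and should replace your selection rule: the negated conclusion implies that $\bigcup_{n>N}T_n$ is infinite for every $N$ (otherwise that union would serve as the required finite $T$), so one can greedily pick $s_k\in\bigcup_{n>n_{k-1}}T_n$ distinct from $s_1,\dots,s_{k-1}$ and then $n_k>n_{k-1}$ witnessing $s_k\in T_{n_k}$. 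With pairwise distinct $s_k$ in hand, your excluding box $U_{s_k}=X_{s_k}\setminus\{(x_{n_k})_{s_k}\}$ goes through verbatim and contradicts box convergence. (Your original recipe does work once all $T_n$ are known to be finite, but establishing that requires a separate appeal to box convergence, which you do not make.)
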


The last property imply the next fact.
\begin{proposition}\label{prop:cont_with_neigh}
  Let $X$ be a first countable space, $(Y_s:s\in S)$ be a family of $T_1$-spaces and $f:X\to \Box_{s\in S}Y_s$ be a continuous map. Then for every $x\in X$ there exist  an open neighborhood $U$ of $x$ and a finite set $T\subseteq S$ such that $f(z)|_{S\setminus T}=f(x)|_{S\setminus T}$ for all $z\in U$.   In particular, $f(U)\subseteq \sigma(f(x))$.
\end{proposition}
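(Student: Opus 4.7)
The plan is to argue by contradiction using the sequential characterization of box-convergence given in Proposition~\ref{properties}(4), which applies here because $X$ is first countable and the $Y_s$ are $T_1$. So I fix $x\in X$ with $a=f(x)$ and suppose no neighborhood $U$ of $x$ and no finite set $T\subseteq S$ satisfy the conclusion. The goal is to produce a sequence $z_n\to x$ whose image $f(z_n)$ cannot converge to $a$ in the box topology, contradicting continuity of $f$.

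Fix a countable decreasing open base $(U_n)_{n\in\omega}$ at $x$. I will inductively construct points $z_n\in U_n$ and pairwise distinct indices $s_n\in S$ with $f(z_n)_{s_n}\neq a_{s_n}$, as follows. Suppose $s_1,\dots,s_{n-1}$ have been chosen, and set $T_{n-1}=\{s_1,\dots,s_{n-1}\}$ (with $T_0=\emptyset$). By the contradiction hypothesis, applied to the neighborhood $U_n$ and the finite set $T_{n-1}$, there exists $z_n\in U_n$ such that $f(z_n)|_{S\setminus T_{n-1}}\neq a|_{S\setminus T_{n-1}}$; pick $s_n\in S\setminus T_{n-1}$ witnessing this, so $f(z_n)_{s_n}\neq a_{s_n}$. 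By construction $s_n\notin\{s_1,\dots,s_{n-1}\}$, so the $s_n$ are pairwise distinct.

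Since $(U_n)$ is a decreasing neighborhood base, $z_n\to x$ in $X$. Continuity of $f$ into $\Box_{s\in S}Y_s$ yields $f(z_n)\to a$ in the box topology, so Proposition~\ref{properties}(4) gives some $k\in\omega$ and a finite set $T\subseteq S$ with $\{f(z_n):n\geq k\}\subseteq\boxdot_T$, the subspace $\boxdot_T$ of $\sigma(a)$. By definition of $\boxdot_T$, for each $n\geq k$ the set of coordinates at which $f(z_n)$ differs from $a$ is exactly $T$; in particular $s_n\in T$ for every $n\geq k$. This forces the infinite set $\{s_n:n\geq k\}$ into the finite set $T$, a contradiction. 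The ``in particular'' clause follows at once, since for each $z\in U$ the point $f(z)$ agrees with $f(x)$ off the finite set $T$ and hence lies in $\sigma(f(x))$.

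I do not foresee a substantive obstacle: the whole argument is a routine diagonalization once first countability and Proposition~\ref{properties}(4) are at hand. The only point that requires minor care is the bookkeeping that keeps the chosen indices $s_n$ pairwise distinct, which is ensured automatically by always picking $s_n$ outside the previously chosen $T_{n-1}$.
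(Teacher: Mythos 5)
Your proof is correct: the diagonalization over a decreasing neighborhood base, combined with Proposition~\ref{properties}(4), is exactly the argument the paper has in mind (the paper omits the proof, noting only that the statement follows from that property). The one cosmetic point is that $\{f(z_n):n\ge k\}\subseteq\boxdot_T$ literally forces the set of differing coordinates to equal $T$ for each $n\ge k$, but all you need (and all that the robust reading of Proposition~\ref{properties}(4) gives) is containment in $T$, which still yields $s_n\in T$ and the contradiction.
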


Recall that a topological space $X$ is {\it functionally Hausdorff} if for every  $x,y\in X$ with $x\ne y$, there exists a continuous function $f:X\to [0,1]$ such that $f(x)\ne f(y)$.

\begin{proposition}\label{pr:7.0}
 Let $(X_s:s\in S)$ be a family of Hausdorff spaces and $X\subseteq \Box_{s\in S}X_s$ be a connected subspace.  If
\begin{enumerate}
  \item $X$ is path-connected, or

  \item every $X_s$ is functionally Hausdorff,
\end{enumerate}
 then there exists $x^*\in X$ such that $X\subseteq \sigma(x^*)$.
\end{proposition}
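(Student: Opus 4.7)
In both cases fix an arbitrary $x^* \in X$; the aim is to show that for every $y \in X$, the set $T := \{s \in S : x^*_s \ne y_s\}$ is finite, i.e., $y \in \sigma(x^*)$.

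\emph{Case (1).} Choose a path $\gamma: [0, 1] \to X$ with $\gamma(0) = x^*$ and $\gamma(1) = y$. Since $[0, 1]$ is first countable, Proposition~\ref{prop:cont_with_neigh} applies to $\gamma$ and produces, for each $t \in [0, 1]$, an open neighbourhood $U_t$ of $t$ together with a finite $T_t \subseteq S$ such that $\gamma(z)|_{S \setminus T_t} = \gamma(t)|_{S \setminus T_t}$ whenever $z \in U_t$. Extract a finite subcover $U_{t_1}, \ldots, U_{t_n}$ by compactness of $[0, 1]$ and set $T = T_{t_1} \cup \cdots \cup T_{t_n}$; on each $U_{t_i}$ the restriction $\gamma(\cdot)|_{S \setminus T}$ is constant, and the connectedness of $[0, 1]$ lets one chain overlapping neighbourhoods together, yielding $\gamma(0)|_{S \setminus T} = \gamma(1)|_{S \setminus T}$, so $\{s : x^*_s \ne y_s\} \subseteq T$ is finite.

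\emph{Case (2).} Argue by contradiction, assuming $T$ is infinite for some $y \in X$. Fix a countably infinite subset $\{s_n : n \in \omega\} \subseteq T$; by functional Hausdorffness of each $X_{s_n}$, pick continuous $\varphi_{s_n}: X_{s_n} \to [0, 1]$ with $\varphi_{s_n}(x^*_{s_n}) = 0$ and $\varphi_{s_n}(y_{s_n}) = 1$, and set $\varphi_s \equiv 0$ for every other $s$. Then the coordinatewise map $\Phi: \Box_{s \in S} X_s \to \Box_{s \in S} [0, 1]$, $\Phi(z)_s = \varphi_s(z_s)$, is continuous. The plan is to verify that
\[
  A = \bigl\{ z \in \Box_{s \in S} [0, 1] : z_{s_n} \to 0 \text{ as } n \to \infty \bigr\}
\]
is clopen: openness is witnessed, at $z \in A$, by the basic box with factors $(z_{s_n} - 1/n, z_{s_n} + 1/n) \cap [0, 1]$ on $\{s_n\}$ and $[0, 1]$ elsewhere; closedness is witnessed, at $z \notin A$, by picking $\varepsilon > 0$ and an infinite $N \subseteq \omega$ with $z_{s_n} \ge \varepsilon$ for $n \in N$, and placing factors of radius $\varepsilon / 2$ on those coordinates. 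Since $\Phi(x^*) = \mathbf{0} \in A$ while $\Phi(y)_{s_n} = 1$ for every $n$ forces $\Phi(y) \notin A$, the set $X \cap \Phi^{-1}(A)$ is a proper non-empty clopen subset of $X$, contradicting its connectedness.

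The delicate step is the construction of the clopen set $A$ in Case~(2): the box topology on $\Box_{s \in S}[0,1]$ is just fine enough to make the ``eventually small along $\{s_n\}$'' condition both open (use radii $1/n \to 0$) and closed (use a uniform positive radius on the witnessing infinite set). Functional Hausdorffness serves precisely to transport this separation back through $\Phi$ to a continuous splitting of $X$. Case~(1), by contrast, depends on a different mechanism — the local coordinate stabilisation supplied by first countability of $[0,1]$ together with a chain-connectedness argument.
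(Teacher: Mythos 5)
Your proof is correct and follows essentially the same route as the paper's: in case (1) the same chaining of the local stabilization neighbourhoods from Proposition~\ref{prop:cont_with_neigh} along a path, and in case (2) the same construction of a clopen set of the form $\{z : z_{s_n}\to 0\}$ in a box power of $[0,1]$, pulled back through a coordinatewise map built from the functionally Hausdorff separations. You in fact supply the verification that this set is clopen, which the paper only asserts.
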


\begin{proof} We fix points $x=(x_s)_{s\in S}$ and $y=(y_s)_{s\in S}$ from $X$ and show that $x$ differs with $y$ for finitely many coordinates.
\begin{enumerate}
\item Let $\varphi:[0,1]\to X$ be a continuous function such that  $\varphi(0)=x$ and $\varphi(1)=y$.
Proposition~\ref{prop:cont_with_neigh} implies that for every $t\in [0,1]$ there exists an open neighborhood $U_t$ of $t$ such that $\varphi(U_t)\subseteq \sigma(\varphi(t))$.  Let points  $t_1,\dots t_n\in [0,1]$ be such that $[0,1]\subseteq \bigcup_{i=1}^{n}U_{t_i}$. Without loss of generality we may assume that the set $\{t_1,\dots t_n\}$ is minimal and  $t_1< t_2<\dots <t_n$. Take   $\tau_1\in U_{t_1}\cap U_{t_2}$,\dots, $\tau_{n-1}\in U_{t_{n-1}}\cap U_{t_n}$ and notice that $\sigma(\varphi(\tau_1))=\dots=\sigma(\varphi(\tau_n))$. Therefore, $\varphi([0,1])\subseteq\sigma(\varphi(\tau_1))$.

\item Assume that the set $\{s\in S: x_s\ne y_s\}$ is infinite and show that there exists a clopen set $U\subseteq \Box_{s\in S}X_s$ such that $x\in U$ and $y\not\in U$. We take a countable subset $$T=\{t_n:n\in \omega\}\subseteq \{s\in S: x_s\ne y_s\},$$ where all $t_n$ are distinct.
    Since each $X_s$ is functionally Hausdorff, for every $s\in T$ one can choose a continuous function $f_s:X_s\to [0,1]$ such that $x_s\subseteq f_s^{-1}(0)$ and $y_s\subseteq f_s^{-1}(1)$. For every $s\in T$ we denote $I_s=[0,1]$ and define a continuous map $f:\Box_{s\in S}X_s\to \Box_{s\in T}I_s$, $f((z_s)_{s\in S})=(f_s(z_s))_{s\in T}$ for all $z=(z_s)_{s\in S}\in \Box_{s\in S}X_s$. Note that the set
\begin{gather*}
V=\{(u_s)_{s\in T}\in[0,1]^T:u_{t_n}\to 0\}
\end{gather*}
is clopen in $\Box_{s\in T}I_s$. It remains to put
$$
U=f^{-1}(V).
$$\end{enumerate}
\end{proof}

Let $X$ be a topological space and $\Delta=\{(x,x):x\in X\}$. A set  $A\subseteq X$ is called {\it equiconnected in $X$} if there exists a continuous mapping  $\lambda:((X\times X)\cup \Delta)\times [0,1]\to X$ such that
$\lambda(A\times A\times [0,1])\subseteq A$, $\lambda(x,y,0)=\lambda(y,x,1)=x$ for all $x,y\in A$ and $\lambda(x,x,t)=x$ for all $x\in X$ and $t\in [0,1]$.
A space is {\it equiconnected} if it is equiconnected in itself.
Notice that any topological vector space is equiconnected, where a mapping  $\lambda$ is defined by $\lambda(x,y,t)=(1-t)x+t y$.

\begin{proposition}\label{pr:7.1}
  Let $(X_s)_{s\in S}$ be a family of equiconnected spaces $(X_s,\lambda_s)$. Then each small box-product $\boxdot_{s\in S}X_s$ is equiconnected.
\end{proposition}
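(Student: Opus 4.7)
The plan is to define the equiconnecting map $\lambda$ on $\boxdot = \boxdot_{s \in S} X_s = \sigma(a)$ coordinatewise, namely
\[
\lambda\bigl((x_s)_{s \in S}, (y_s)_{s \in S}, t\bigr) = \bigl(\lambda_s(x_s, y_s, t)\bigr)_{s \in S},
\]
and then verify the three requirements: that $\lambda$ actually lands in $\boxdot$, that the boundary conditions hold, and (the real work) that $\lambda$ is continuous in the box topology. The first two are immediate. For the target, if $x, y \in \sigma(a)$ then $T = \{s : x_s \neq a_s\} \cup \{s : y_s \neq a_s\}$ is finite, and for $s \notin T$ we get $\lambda_s(x_s, y_s, t) = \lambda_s(a_s, a_s, t) = a_s$, hence $\lambda(x,y,t) \in \sigma(a)$. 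The boundary identities $\lambda(x,y,0)=x$, $\lambda(x,y,1)=y$, $\lambda(x,x,t)=x$ follow directly from the corresponding identities for each $\lambda_s$.

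The main obstacle is continuity, since a naive application of continuity coordinate by coordinate yields, for each $s$, a neighborhood $J_s$ of the time parameter $t_0$, and the intersection of infinitely many such $J_s$ need not be a neighborhood. I would resolve this as follows. Fix $(x, y, t_0) \in \boxdot \times \boxdot \times [0,1]$ and a basic box-open neighborhood $\prod_{s \in S} U_s$ of $\lambda(x, y, t_0)$. Let $T = \{s : x_s \neq a_s\} \cup \{s : y_s \neq a_s\}$, which is finite. For each $s \in T$, continuity of $\lambda_s$ at $(x_s, y_s, t_0)$ gives open neighborhoods $V_s \ni x_s$, $W_s \ni y_s$ in $X_s$ and $J_s \ni t_0$ in $[0,1]$ with $\lambda_s(V_s \times W_s \times J_s) \subseteq U_s$.

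For $s \notin T$, I handle the time parameter using compactness of $[0,1]$: since $\lambda_s(a_s, a_s, t) = a_s \in U_s$ for every $t \in [0,1]$, continuity of $\lambda_s$ at each $(a_s, a_s, t)$ together with a finite subcover of $[0,1]$ yields a single open neighborhood $V_s = W_s$ of $a_s$ in $X_s$ with
\[
\lambda_s(V_s \times V_s \times [0,1]) \subseteq U_s.
\]
Setting $J = \bigcap_{s \in T} J_s$ (a finite intersection, hence an open neighborhood of $t_0$), the set $\bigl(\prod_{s \in S} V_s\bigr) \times \bigl(\prod_{s \in S} W_s\bigr) \times J$, intersected with $\boxdot \times \boxdot \times [0,1]$, is a box-open neighborhood of $(x, y, t_0)$ whose image under $\lambda$ lies in $\prod_{s \in S} U_s$. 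This gives continuity, and the proposition follows. The only conceptual point worth highlighting in the write-up is this compactness trick, which exploits the triviality of $\lambda_s$ along the diagonal at the base point $a_s$ to absorb the time variable uniformly on the cofinite set of coordinates $S \setminus T$.
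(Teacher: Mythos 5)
Your proof is correct and uses the same coordinatewise definition of $\lambda$ as the paper, which simply asserts that equiconnectedness of $(\boxdot_{s\in S}X_s,\lambda)$ is ``easy to see.'' The only substantive content you add is the verification of continuity in the box topology, and your compactness argument on the cofinite set of coordinates --- exploiting $\lambda_s(a_s,a_s,t)=a_s$ to absorb the time variable uniformly --- is exactly the right way to fill in that detail.
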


\begin{proof}
For  $z=(z_s)_{s\in S}, w=(w_s)_{s\in S}\in \boxdot_{s\in S}X_s$ and $t\in[0,1]$ we put
\begin{equation}\label{eq:7.1}
 \lambda(z,w,t)=(\lambda_s(z_s,w_s,t))_{s\in S}.
 \end{equation}
It is easy to see that the space $(\boxdot_{s\in S}X_s,\lambda)$ is equiconnected.
\end{proof}

A covering $(X_n:n\in\omega)$ of a topological space $X$ is said to be {\it sequentially absorbing} if for any convergent sequence $(x_n)_{n\in\omega}$ there exists $k\in\omega$ such that $\{x_n:n\in\omega\}\subseteq X_k$.
Let us observe that $(\boxdot_{\le n}:n\in\omega)$ is a sequentially absorbing covering of $\boxdot$ by Proposition~\ref{properties}(4).

A topological space $X$ is said to be {\it strongly $\sigma$-metrizable} if it has a sequentially  absorbing covering (which is called {\it a stratification of $X$}) by metrizable subspaces. A stratification $(X_n)_{n=1}^{\infty}$ of a space $X$ is said to be {\it perfect} if for every  $n\in\mathbb N$ there exists a continuous mapping $\pi_n:X\to X_n$ with $\pi_n(x)=x$ for every $x\in X_n$. Notice that according to \cite{BB} every strongly $\sigma$-metrizable space $X$ is {\it super $\sigma$-metrizable}, that is there exists a covering $(X_n:n\in\omega)$ of $X$ by closed subspaces $X_n$ such that every compact subset of $X$ is contained in some $X_n$. A stratification $(X_n)_{n=1}^{\infty}$ of an equiconnected  strongly $\sigma$-metrizable space $X$ is  {\it compatible with $\lambda$} if $\lambda(X_n\times X_n\times[0,1])\subseteq X_n$ for every $n\in\mathbb N$.

\begin{proposition}\label{pr:7.2}
Let $(X_n)_{n=1}^{\infty}$ be a sequence of metrizable equiconnected spaces $(X_n,\lambda_n)$, $a\in\prod\limits_{n=1}^{\infty}X_n$  and $Z=\sigma(a)=\boxdot_{n\in \omega}X_n$. Then there exists  $\lambda:Z\times Z\times [0,1]\to Z$ such that $(\boxdot_{n\in \omega}X_n,\lambda)$ is strongly $\sigma$-metrizable equiconnected space with a perfect stratification $(Z_n)_{n=1}^{\infty}$ assigned with $\lambda$.
\end{proposition}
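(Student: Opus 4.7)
My plan is to define $\lambda$ coordinatewise as in Proposition~\ref{pr:7.1}, so that $(Z,\lambda)$ is equiconnected, and then to exhibit an explicit perfect stratification compatible with $\lambda$. After identifying $S$ with $\mathbb{N}$, I would take
\[
Z_n = \{x=(x_s)_{s\in S}\in Z : x_s = a_s \text{ for all } s > n\}, \qquad n\in\mathbb{N}.
\]
Each $Z_n$ is canonically homeomorphic to the finite product $\prod_{s\le n} X_s$---only $n$ coordinates are free, so the restriction of the box topology to $Z_n$ coincides with the product topology---hence is metrizable. The $Z_n$ form an increasing chain whose union is $Z$, because every point of $Z$ differs from $a$ on a finite set of coordinates.

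Next I would verify the three required properties. Sequential absorption is a direct consequence of Proposition~\ref{properties}(4): if $x^{(k)}\to x$ in $Z$, there exist a finite $T\subseteq S$ and an index $k_0$ so that $x^{(k)}_s = x_s$ for all $s\notin T$ and all $k\ge k_0$; taking $n$ larger than every element of $T\cup\{s:x_s\ne a_s\}$ then yields $x^{(k)}\in Z_n$ for every $k\ge k_0$. For a perfect retraction I would take $\pi_n:Z\to Z_n$ defined by $\pi_n(x)_s=x_s$ for $s\le n$ and $\pi_n(x)_s=a_s$ for $s>n$; continuity is immediate from the definition of the box topology, since the $\pi_n$-preimage of a basic box $\prod_{s\le n}V_s$ in $Z_n$ is the box $\prod_{s\in S}V_s$ with $V_s=X_s$ for $s>n$, intersected with $Z$. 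Compatibility with $\lambda$ is automatic: for $x,y\in Z_n$ and $s>n$ one has $x_s=y_s=a_s$, hence $\lambda_s(x_s,y_s,t)=a_s$ and therefore $\lambda(x,y,t)\in Z_n$.

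The conceptual obstacle is not technical but lies in choosing the right strata. The most natural first guess is $Z_n=\boxdot_{\le n}$, the set of points differing from $a$ in at most $n$ coordinates, since this is an intrinsic decomposition of $Z$. However, $\boxdot_{\le n}$ is typically non-metrizable in the box topology (for example, the point $a$ has no countable neighborhood base there), and no continuous retraction $Z\to\boxdot_{\le n}$ exists---any rule such as ``keep the first $n$ indices where $x$ differs from $a$'' is discontinuous, since a small perturbation of a single coordinate can flip which indices get kept. The remedy is to stratify by \emph{which} coordinates are permitted to deviate from $a$ (namely, the first $n$ of them) rather than by \emph{how many} do deviate, at which point metrizability, the retraction, and $\lambda$-compatibility all become transparent.
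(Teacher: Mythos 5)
Your proof is correct and takes essentially the same route as the paper: the same coordinatewise $\lambda$, the same strata $Z_n$ consisting of points that agree with $a$ beyond the $n$-th coordinate, and the same truncation retractions $\pi_n$; your discussion of why $\boxdot_{\le n}$ would fail is a useful addition but not a divergence. (One cosmetic point: sequential absorption requires the \emph{whole} sequence, not just a tail, to lie in a single $Z_n$, but the finitely many initial terms are handled by enlarging $n$.)
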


\begin{proof} For any  $z=(z_n)_{n=1}^{\infty}$, $w=(w_n)_{n=1}^{\infty}\in Z$ and $t\in[0,1]$ we put
$$
\lambda(z,w,t)=(\lambda_n(z_n,w_n,t))_{n=1}^{\infty}
$$
and notice that the space  $(\boxdot_{n\in \omega}X_n,\lambda)$ is equiconnected.

For every $n\in\mathbb N$ let
$$
Z_n=\{(z_k)_{k=1}^{\infty}\in Z:(z_k=a_k)(\forall k>n)\}.
$$
 The space $\boxdot_{n\in \omega}X_n$ is strongly $\sigma$-metrizable with the stratification $(Z_n)_{n=1}^{\infty}$. Since $\lambda(Z_n\times Z_n\times[0,1])\subseteq Z_n$, the stratification $(Z_n)_{n=1}^{\infty}$ is assigned with $\lambda$. Moreover, for every $n\in\mathbb N$ the map $\pi_n:Z\to Z_n$ is continuous, where $\pi_n((z_k)_{k=1}^\infty)=(w_k)_{k=1}^\infty$ and $w_k=z_k$ for $k\leq n$, $w_k=a_k$ for $k>n$. Hence, the stratification  $(Z_n)_{n=1}^{\infty}$ is perfect.
\end{proof}

Theorem 6 from \cite{KMS1} implies the following result.

\begin{theorem}\label{cor:7.1}
Let $X$ be a topological space, $S$ be a countable set, $(Z_s)_{s\in S}$ be a sequence of metrizable equiconnected spaces, $n\in\mathbb N$ and $g\in {\rm B}_{n-1}(X,\boxdot_{s\in S}Z_s)$. Then there are a separately continuous map $f:X^n\to \boxdot_{s\in S}Z_s$   and a map $h\in CB_{n-1}(X\times X,\boxdot_{s\in S}Z_s)\cap CC_{n-1}(X\times X,\boxdot_{s\in S}Z_s)$ both with the diagonal $g$.
\end{theorem}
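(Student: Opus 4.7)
The plan is to read this statement as a direct corollary of Theorem 6 from \cite{KMS1}, which gives the same conclusion (a separately continuous extension from the diagonal together with a companion map in the intersection $CB_{n-1}\cap CC_{n-1}$) whenever the target is an equiconnected strongly $\sigma$-metrizable space equipped with a perfect stratification compatible with the equiconnecting map. So the work reduces to verifying that the box product $Z=\boxdot_{s\in S}Z_s$ fits into this class, after which the result of \cite{KMS1} can be quoted verbatim.

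For that structural verification I would simply invoke Proposition~\ref{pr:7.2}: since $S$ is countable and each $Z_s$ is metrizable equiconnected with equiconnecting map $\lambda_s$, the coordinatewise formula $\lambda(z,w,t)=(\lambda_s(z_s,w_s,t))_{s\in S}$ makes $(Z,\lambda)$ equiconnected, and the truncations $Z_n=\{(z_s)_{s\in S}\in Z:z_s=a_s\text{ for all indices past the }n\text{-th}\}$ form a perfect stratification compatible with $\lambda$. The compatibility $\lambda(Z_n\times Z_n\times[0,1])\subseteq Z_n$ is automatic from the fact that $\lambda_s$ fixes $a_s$ on the diagonal, and the retractions $\pi_n\colon Z\to Z_n$ are continuous in the box topology because they only alter finitely many coordinates. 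This is exactly the hypothesis on the target required by \cite{KMS1}.

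Once $Z$ has been put into the right class, the proof finishes by feeding $g\in B_{n-1}(X,Z)$ into Theorem 6 of \cite{KMS1}, which simultaneously produces the separately continuous $f\colon X^n\to Z$ with diagonal $g$ and the $h\in CB_{n-1}(X\times X,Z)\cap CC_{n-1}(X\times X,Z)$ with diagonal $g$. The only real obstacle is terminological, namely matching the notion of ``strongly $\sigma$-metrizable equiconnected space with a perfect stratification assigned with $\lambda$'' used in the present paper to the class of targets that \cite{KMS1} treats; all of this matching has already been absorbed into Proposition~\ref{pr:7.2} and into the discussion immediately preceding it, so no additional argument beyond the direct citation is needed.
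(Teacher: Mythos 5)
Your proposal matches the paper's own treatment exactly: the paper proves this theorem by citing Theorem 6 of \cite{KMS1} directly, with Proposition~\ref{pr:7.2} serving precisely the role you assign it, namely verifying that the countable small box product $\boxdot_{s\in S}Z_s$ is an equiconnected strongly $\sigma$-metrizable space with a perfect stratification compatible with the coordinatewise $\lambda$. No further argument is needed, and your identification of the terminological matching as the only real content of the proof is accurate.
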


\section{The case of uncountable box-products}

\begin{theorem}\label{cor:7.2}
  Let $X$ be a Lindel\"{o}f first countable space, $(Z_s)_{s\in S}$ be a family of metrizable equiconnected spaces  $(Z_s,\lambda_s)$, $n\in\mathbb N$ and $g\in {\rm B}_{n-1}(X,\boxdot_{s\in S}Z_s)$. Then there are a separately continuous map $f:X^n\to \boxdot_{s\in S}Z_s$  and a map $h\in CB_{n-1}(X\times X,\boxdot_{s\in S}Z_s)\cap CC_{n-1}(X\times X,\boxdot_{s\in S}Z_s)$ both with the diagonal $g$.
\end{theorem}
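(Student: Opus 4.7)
The plan is to reduce Theorem~\ref{cor:7.2} to its countable analogue Theorem~\ref{cor:7.1} by proving a ``countable support'' statement: under the present hypotheses on $X$, every $g\in {\rm B}_{n-1}(X, \boxdot_{s\in S}Z_s)$ factors through a countable sub-box-product.

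First I would establish this factorization for continuous $g$. Fix a base point $a$ with $\boxdot=\sigma(a)$. Since $X$ is first countable and each $Z_s$ is metrizable (hence $T_1$), Proposition~\ref{prop:cont_with_neigh} supplies for every $x\in X$ an open neighborhood $U_x$ and a finite set $T_x\subseteq S$ with $g(z)|_{S\setminus T_x}=g(x)|_{S\setminus T_x}$ on $U_x$. Using the Lindel\"of property I extract a countable subcover $(U_{x_n})_{n\in\omega}$ and set
$$
T_0=\bigcup_{n\in\omega}T_{x_n}\cup\bigcup_{n\in\omega}\{s\in S:g(x_n)_s\ne a_s\},
$$
still a countable subset of $S$. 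For $x\in U_{x_n}$ and $s\notin T_0$ one has $g(x)_s=g(x_n)_s=a_s$, so $g$ takes its values in the closed subspace
$$
A(T_0)=\{z\in\boxdot_{s\in S}Z_s:z_s=a_s\ \text{for all}\ s\notin T_0\},
$$
which is canonically homeomorphic to $\boxdot_{s\in T_0}Z_s$.

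Next I would extend this to the full Baire hierarchy by induction on $n$. If $g\in{\rm B}_{n-1}$ is a pointwise limit of maps $g_k\in{\rm B}_{n-2}$, the inductive hypothesis provides countable $T_k\subseteq S$ with $g_k(X)\subseteq A(T_k)$; setting $T=\bigcup_k T_k$, every $g_k$, and hence (by continuity of the coordinate projections) also $g$, takes values in $A(T)$. Since the subspace topology inherited by $A(T)$ from $\boxdot_{s\in S}Z_s$ coincides with the box topology on $\boxdot_{s\in T}Z_s$, pointwise convergence in the big box-product agrees with pointwise convergence in the countable one, and so $g\in{\rm B}_{n-1}(X,\boxdot_{s\in T}Z_s)$ with $T$ countable.

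With the reduction achieved, I would apply Theorem~\ref{cor:7.1} to $g:X\to\boxdot_{s\in T}Z_s$ to obtain a separately continuous $f:X^n\to\boxdot_{s\in T}Z_s$ and a map $h\in CB_{n-1}(X^2,\boxdot_{s\in T}Z_s)\cap CC_{n-1}(X^2,\boxdot_{s\in T}Z_s)$, both with diagonal $g$, and then compose each with the continuous inclusion $\boxdot_{s\in T}Z_s\hookrightarrow\boxdot_{s\in S}Z_s$; post-composition with a continuous map preserves separate continuity and membership in $CB_{n-1}$ and $CC_{n-1}$. The only substantive step is the continuous-case factorization in the second paragraph, which rests on combining first countability (to invoke Proposition~\ref{prop:cont_with_neigh}) with the Lindel\"of property to globalize the local finite data into a single countable set $T_0$; the induction on the Baire level and the final appeal to Theorem~\ref{cor:7.1} are routine.
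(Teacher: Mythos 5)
Your proposal is correct and follows essentially the same route as the paper: both reduce to the continuous case via Proposition~\ref{prop:cont_with_neigh} together with the Lindel\"of property to collect a countable set $T\subseteq S$ of relevant coordinates, propagate this up the Baire hierarchy by taking pointwise limits (the paper organizes this as a tree of approximating maps indexed by $\mathbb N^m$, you as an induction on the Baire level, which amounts to the same thing), and then invoke Theorem~\ref{cor:7.1} for the countable sub-box-product. Your explicit enlargement of $T_0$ by the coordinates where $g(x_n)$ differs from $a$ is a detail the paper glosses over, but nothing more.
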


\begin{proof}  Inductively for $m\in\{0,\dots, n-1\}$ we choose families  $(g_\alpha:\alpha\in\mathbb N^m)$ of maps $g_\alpha\in B_{n-m-1}(X,\boxdot_{s\in S}Z_s)$ such that
\begin{equation}\label{eq:7.0}
g_\alpha(x)=\lim\limits_{k\to\infty}g_{\alpha,k}(x)
\end{equation}
for all $x\in X$, $0\leq m\leq n-2$ and $\alpha\in\mathbb N^m$, where $g_\alpha=g$ for a single element $\alpha\in \mathbb N^0$.

 Fix $\alpha\in\mathbb N^{n-1}$. For every $x\in X$  we apply Proposition~\ref{prop:cont_with_neigh} to the continuous map $g_\alpha\in B_0(X,\boxdot_{s\in S}Z_s)$ and take an open neighborhood $U(\alpha, x)$ of $x$ and a finite set $S(\alpha, x)\subseteq S$ such that
 $$
 g_\alpha(z)|_{S\setminus S({\alpha,x})}=a|_{S\setminus S({\alpha,x})}
 $$ for all $z\in U(\alpha,x)$.
 Since $X$ is Lindel\"{o}f,  we choose a countable set $A_\alpha\subseteq X$ such that î $X\subseteq \bigcup\limits_{x\in A_\alpha}U(\alpha,x)$.
 Consider the countable set $$S_0=\bigcup\limits_{\alpha\in \mathbb N^{n-1} }\bigcup\limits_{x\in A_\alpha}S(\alpha,x).$$ Notice that $g_{\alpha}(x)|_{S\setminus S_0}=a|_{S\setminus S_0}$ for all $x\in X$. Then $$g(x)|_{S\setminus S_0}=a|_{S\setminus S_0}$$ for all $x\in X$ according to (\ref{eq:7.0}).
 It remains to apply Theorem~\ref{cor:7.1} for $S=S_0$.
\end{proof}

 \begin{lemma}\label{l:7.4}
Let $X$  be a collectionwise normal space, $(F_n)_{n\in\omega}$ be an increasing sequence of closed subsets of $X$, $A=\bigcup_{n\in\omega}F_n$,
 $(C_i:i\in I)$ be a partition of $A$ by its clopen subsets. Then there exists a sequence $(\mathscr U_n)_{n\in\omega}$ of discrete families $\mathscr U_n=(U_{n,i}:i\in I)$ of functionally open subsets of $X$ such that  $F_n\cap C_i\subseteq U_{n,i}$ for all $n\in\omega$ and $i\in I$.
\end{lemma}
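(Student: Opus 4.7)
The plan is to handle each $n \in \omega$ separately: verify that $(F_n \cap C_i)_{i \in I}$ is a discrete family of closed subsets of $X$, invoke collectionwise normality to expand it to a discrete family of open sets, and then shrink those open sets to cozero sets via Urysohn's lemma. To begin, each $F_n \cap C_i$ is closed in $X$: indeed $F_n$ is closed in $X$ by hypothesis, and since $(C_i)_{i \in I}$ partitions $A$ into subsets that are clopen in $A$, each $C_i$ is closed in $A$, so $F_n \cap C_i$ is closed in $F_n$ and hence in $X$.

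The main step is to check the discreteness of $(F_n \cap C_i)_{i \in I}$ in $X$. For $x \notin F_n$, the open neighborhood $X \setminus F_n$ meets no member of the family. For $x \in F_n \subseteq A$, there is a unique $j \in I$ with $x \in C_j$, and since $C_j$ is \emph{open} in $A$, I can choose an open set $U \subseteq X$ with $U \cap A = C_j$. Then for every $i \ne j$ we have
$U \cap F_n \cap C_i \subseteq U \cap A \cap C_i = C_j \cap C_i = \emptyset$,
so $U$ is a neighborhood of $x$ meeting at most one member of the family.

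Collectionwise normality then delivers, for each $n$, a discrete family $(V_{n,i})_{i \in I}$ of open subsets of $X$ with $F_n \cap C_i \subseteq V_{n,i}$. To upgrade each $V_{n,i}$ to a functionally open set, I would apply Urysohn's lemma in the normal space $X$ to the disjoint closed pair $F_n \cap C_i$ and $X \setminus V_{n,i}$, obtaining a continuous $\varphi_{n,i} \colon X \to [0,1]$ vanishing on the former and equal to $1$ on the latter; then $U_{n,i} := \{x : \varphi_{n,i}(x) < 1/2\}$ is a cozero set with $F_n \cap C_i \subseteq U_{n,i} \subseteq V_{n,i}$. Since $(V_{n,i})_{i \in I}$ is discrete and $U_{n,i} \subseteq V_{n,i}$, the family $\mathscr{U}_n = (U_{n,i})_{i \in I}$ is still discrete, as required.

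The only subtle point is the discreteness argument in the second paragraph, which pivots entirely on the fact that each $C_i$ is \emph{open} in $A$ — i.e., it uses the ``clopen'' hypothesis beyond mere closedness. Once discreteness is secured, the rest is a routine combination of collectionwise normality with Urysohn separation, and there is no interaction across different values of $n$ to worry about.
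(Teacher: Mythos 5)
Your proof is correct and follows essentially the same route as the paper: verify that $(F_n\cap C_i)_{i\in I}$ is discrete, expand it by collectionwise normality to a discrete open family, and shrink to functionally open sets using normality. The only difference is that you spell out the discreteness verification (via the relative openness of $C_j$ in $A$), which the paper states without proof.
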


\begin{proof} Fix $n\in\omega$ and put $F_{n,i}=F_n\cap C_i$ for all $i\in I$. Notice that $(F_{n,i}:i\in I)$ is a discrete family of  closed subsets of $X$. Therefore, there exists a discrete family $(V_{n,i}:i\in I)$ of open subsets of $X$ such that $F_{n,i}\subseteq V_{n,i}$ for all $i\in I$. Since $X$ is normal, for every $i\in I$ there exists  a functionally open set $U_{n,i}\subseteq X$ such that $F_{n,i}\subseteq U_{n,i}\subseteq V_{n,i}$.
\end{proof}

\begin{proposition}\label{pr:7.3}
Let  $X$ be a paracompact space, $(Z_s)_{s\in S}$ be a family of equiconnected metrizable spaces $(Z_s,\lambda_s)$, $a\in Z_S$ and $g\in {\rm B}_1(X,\boxdot_{s\in S}Z_s)$.  Then there exist  a sequence of continuous maps $g_n:X\to \boxdot_{s\in S}Z_s$ and a sequence of functionally open sets $W_n\subseteq X^2$ such that
\begin{enumerate}
\item $\{(x,x):x\in X\}\subseteq W_n$ for all $n\in\omega$;

\item $\lim\limits_{n\to\infty}g_n(x_n)=g(x)$ for every  $x\in X$ and for any sequence  $(x_n)_{n\in\omega}$ of points $x_n\in X$ satisfying $(x_n,x)\in W_{n}$ for all $n\in\omega$.
\end{enumerate}
\end{proposition}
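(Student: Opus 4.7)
The plan is to unpack the Baire-one hypothesis as a pointwise limit $g=\lim_n \tilde h_n$ of continuous maps $\tilde h_n:X\to\boxdot$, and then engineer a replacement sequence $g_n$ together with open neighborhoods $W_n$ of the diagonal that upgrade pointwise convergence to the claimed ``near-diagonal'' convergence. First I would invoke Proposition~\ref{properties}(4) at each point $x\in X$: there are $k(x)\in\omega$ and a finite $T(x)\subseteq S$ such that $\tilde h_n(x)|_{S\setminus T(x)}=g(x)|_{S\setminus T(x)}$ for every $n\ge k(x)$, and $\tilde h_n(x)|_{T(x)}\to g(x)|_{T(x)}$ coordinate-wise inside the metrizable finite product $\prod_{t\in T(x)}Z_t$. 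This yields a decomposition $X=\bigcup_{m,T}F_{m,T}$ where $F_{m,T}=\{x:\tilde h_n(x)|_{S\setminus T}=g(x)|_{S\setminus T}\,\forall n\ge m\}$, indexed by $(m,T)\in\omega\times[S]^{<\omega}$.

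Next I would use paracompactness (hence collectionwise normality) together with Lemma~\ref{l:7.4} to swell the pieces $F_{m,T}$ into discrete families of functionally open subsets of $X$; these will drive both the construction and the functional openness of the $W_n$. For the $g_n$'s I would glue local choices of $\tilde h_n$ (and, where needed, of $g$ regarded on a single $F_{m,T}$ where it is effectively a map into a finite product) via a subordinate partition of unity, using the equiconnecting structure $\lambda$ on $\boxdot$ from Proposition~\ref{pr:7.1} to form continuous convex-type combinations that remain inside $\boxdot$ rather than escaping into the full product. The set $W_n$ would be defined as the cozero set of a suitable continuous $X^2\to[0,1]$ function arranged so that $(y,x)\in W_n$ places $y$ in a common piece $F_{m,T}$ with $m\le n$: membership then forces $g_n(y)|_{S\setminus T}=g(x)|_{S\setminus T}$ automatically, reducing the convergence statement on $T$-coordinates to a metric problem in $\prod_{t\in T}Z_t$ that is handled by the continuity of $g_n$ together with the pointwise limit $\tilde h_n(x)\to g(x)$.

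The main obstacle is that Proposition~\ref{prop:cont_with_neigh}, the natural tool giving local cofinite-coordinate stability of continuous maps into the box product, requires first-countability of the domain, which we do \emph{not} assume here. Thus the stability must be reconstructed from the pointwise limit structure rather than extracted from continuity of the $\tilde h_n$, and the combinatorics over the uncountable index set $S$ have to be tamed using discreteness of families (Lemma~\ref{l:7.4}) rather than countability (as was available in Theorems~\ref{cor:7.1} and \ref{cor:7.2}). Achieving the functional openness of $W_n$ in $X^2$ while simultaneously synchronizing the finite coordinate sets $T$ attached to nearby points, so that the equiconnected interpolation takes place inside a single metrizable fiber over $S\setminus T$, is the delicate step.
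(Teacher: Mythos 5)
Your overall architecture is the right one --- extract finite-support stability from the pointwise limit via Proposition~\ref{properties}(4) rather than from Proposition~\ref{prop:cont_with_neigh}, decompose $X$ accordingly, swell by collectionwise normality, glue with $\lambda$, and take $W_n$ to be diagonal neighborhoods controlling the oscillation of $g_n$. But the decomposition you propose has a genuine gap that blocks the very step you want it for. The family $F_{m,T}=\{x:\tilde h_n(x)|_{S\setminus T}=g(x)|_{S\setminus T}\ \forall n\ge m\}$, indexed by $(m,T)\in\omega\times[S]^{<\omega}$, is \emph{not} a partition and is not discrete in $T$: one has $F_{m,T}\subseteq F_{m,T'}$ whenever $T\subseteq T'$, so for fixed $m$ these sets overlap massively and the index set is uncountable. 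Lemma~\ref{l:7.4} requires a countable union $A=\bigcup_n F_n$ of closed sets together with a partition of $A$ by sets that are \emph{clopen in $A$}; your $F_{m,T}$ cannot be fed into it, and "swelling the $F_{m,T}$ into discrete families" is exactly the assertion that needs proof. (A secondary point you gloss over: you also need the pieces to be functionally closed, not merely closed, which does not come for free in a paracompact space.)

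The missing idea is a two-stage stratification. First stratify by the \emph{cardinality} of the active coordinate set: the sets $X_{m,n}=\{x:|\{s\in S:(\exists i\ge n)\,\pi_s(h_i(x))\ne a_s\}|\le m\}$ are functionally closed (this is witnessed by an explicit continuous function built from $\sup_{|T|\le m+1}\inf_{s\in T}\max_i|\pi_s(h_i(x))-a_s|_s$), they cover $X$ by Proposition~\ref{properties}(4), and disjointifying them yields countably many functionally $F_\sigma$ pieces $A_k$ on which the active set has \emph{constant} cardinality $m_k$ from index $n_k$ on. Only then does one partition each $A_k$ by the exact active set $R$ with $|R|=m_k$; the resulting pieces $V_{k,R}$ are clopen in $A_k$, and the proof of openness uses the constancy of the cardinality in an essential way (if all $r\in R$ remain active at nearby points of $A_k$, then by counting the active set there is exactly $R$). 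This clopen partition of a countable union of functionally closed sets is what Lemma~\ref{l:7.4} actually consumes, producing the discrete families of functionally open sets on which $\lambda(\pi_R(h_n(\cdot)),a,\cdot)$ is glued. Without the cardinality normalization, the synchronization of the finite coordinate sets that you yourself identify as the delicate step cannot be carried out, so as written the proposal does not close.
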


\begin{proof} Let $\boxdot=\boxdot_{s\in S}Z_s$ and $(h_n)_{n\in\omega}$ be a sequence of continuous maps $h_n:X\to \boxdot$ which converges to $g$ pointwisely on $X$. For every $s\in S$ we fix a metric $|\cdot - \cdot|_s$ on the space $Z_s$ which generates its topology. For a finite set $R\subseteq S$ and points $z=(z_s)_{s\in S}, w=(w_s)_{s\in S}\in \boxdot$ we put $$|z-w|_R=\max_{s\in R}|z_s-w_s|_s.$$ Moreover, for any $r\in S$  we define the function $\pi_r:\boxdot\to Z_r$, $\pi_r((z_s)_{s\in S})=z_r$; and for any finite set $R\subseteq S$ we define the function $\pi_R:\boxdot\to \boxdot$ in such a way:
$\pi_R((z_s)_{s\in S})=(w_s)_{s\in S}$, where $w_s=z_s$ for $s\in R$ and $w_s=a_s$ for $s\in S\setminus R$.

We show that there exists a partition  $(A_k:k\in\omega)$  of $X$ by functionally $F_\sigma$ sets $A_k$ such that
\begin{gather}\label{conditionA}
\forall k\in\omega\,\,\, \exists n_k, m_k\in\omega : |\{s\in S:\,(\exists n\geq n_k)\,(\pi_s(h_n(x))\ne a_s)\}|=m_k\,\,\, \forall x\in A_k.
\end{gather}
For every $n\leq k$ we define a continuous function $\varphi_{m,n,k}:X\to [0,1]$ by the rule
$$
 \varphi_{m,n,k}(x)=\sup_{|T|\leq m+1}\inf_{s\in T}\max\{|\pi_s(h_n(x))-a_s|_s,\dots ,|\pi_s(h_k(x))-a_s|_s\}.
 $$
Notice that $\varphi_{m,n,k}(x)=0$ if and only if $$|\{s\in S:\,(\exists i\in [n,k])\,(\pi_s(h_i(x))\ne a_s)\}|\leq m.$$ We put
 $$
 X_{m,n}= \bigcap\limits_{k\geq n}\varphi_{m,n,k}^{-1}(0)
 $$
for all $m,n\in\omega$ and notice that the set $X_{m,n}$ is functionally closed in $X$ and $x\in X_{m,n}$ if and only if $$|\{s\in S:\,(\exists i\geq n)\,(\pi_s(h_i(x))\ne a_s)\}|\leq m.$$
Since the sequence $(h_n)_{n=1}^\infty$ converges to $g$ on $X$ pointwisely, $X=\bigcup\limits _{m,n\in\omega}X_{m,n}$ by Proposition \ref{properties}~(4).

Let $\phi:\omega\to \omega^2$ be a bijection such that $\phi^{-1}(m_1,n_1)<\phi^{-1}(m_2,n_2)$ if $m_1+n_1<m_2+n_2$. Now we put
$$
A_k=X_{\phi(k)}\setminus \left(\bigcup_{i<k} X_{\phi(i)}\right)
$$
for every $k\in\omega$. Then every set $A_k$ is functionally $F_\sigma$ as a difference of functionally closed sets and
$$
|\{s\in S:\,(\exists n\geq n_k)\,(\pi_s(h_n(x))\ne a_s)\}|=m_k
$$
for every $x\in A_k$, where $(m_k,n_k)=\phi(k)$.

For every $k\in\omega$ we take an increasing sequence $(F_{k,m})_{m\in \omega}$ of functionally closed subsets of $X$ such that $A_k=\bigcup_{m\in\omega} F_{k,m}$. Moreover, for every $m\in\omega$  we choose a family $(G_{k,m}:0\leq k\leq m)$ of functionally open sets such that $F_{k,m}\subseteq G_{k,m}$ for all $0\leq k\leq m$ and $G_{i,m}\cap G_{j,m}=\emptyset$ for all $0\leq i<j\leq m$.

For every $k\in\omega$ and for any set $R\subseteq S$ with $|R|=m_k$ we put
\begin{gather*}
V_{k,R}=\{x\in A_k: \{s\in S:\,(\exists n\geq n_k)\,(\pi_s(h_n(x))\ne a_s)\}=R\}
\end{gather*}
and show that $V_{k,R}$ is clopen in $A_k$. Let $x'\in A_k\setminus V_{k,R}$ and
$$
R^\prime=\{s\in S:\,(\exists n\geq n_k)\,(\pi_s(h_n(x'))\ne a_s)\}.
$$ Since $x'\in A_k$, $|R^\prime|=m_k=|R|$. On the other hand, $x'\not\in V_{k,R}$. Therefore, $R^\prime\ne R$ and there exists $s\in R^\prime\setminus R$. We choose $n\geq n_k$ such that $\pi_s(h_n(x'))\ne a_s$. Since $h_n$ is continuous, the set
$$
U^\prime=\{x\in X:\pi_s(h_n(x))\ne a_s\}
$$
is an open neighborhood of $x'$ in $X$. Moreover, $U^\prime\cap V_{k,R}=\emptyset$. Thus, $V_{k,R}$ is closed in $A_k$.

Now let $x_0\in V_{k,R}$. For every $r\in R$ we choose a function $f_r\in\{h_n:n\geq n_k\}$ such that $\pi_r(f_r(x_0))\ne a_r$. Since all functions $f_r$ are continuous, there exists a neighborhood $U_0$ of $x_0$ in $X$ such that $\pi_r(f_r(x))\ne a_r$ for every $r\in R$ and $x\in U_0$. Therefore, $U_0\cap A_k\subseteq V_{k,R}$ and $V_{k,R}$ is open in $A_k$.

Moreover, condition~(\ref{conditionA}) implies that
$$
\bigsqcup_{|R|=m_k}V_{k,R}=A_k.
$$
Since every paracompact space is collectionwise normal, we apply Lemma~\ref{l:7.4} and find for every $n\geq k$ a discrete family  $\mathscr U_{k,n}=(U_{k,n,R}:|R|=m_k)$ of functionally open subsets $U_{k,n,R}\subseteq G_{k,n}$ of $X$ such that
$$
C_{k,n,R}=F_{k,n}\cap V_{k,R}\subseteq U_{k,n,R}
$$
for all $R\subseteq S$ with $|R|=m_k$. Let us observe that the family
$$
\mathscr U_{n}=\bigsqcup_{0\leq k\leq n}\mathscr U_{k,n}= (U_{k,n,R}:0\leq k\leq n\,,|R|=m_k)
$$
is discrete in $X$ for all  $n\in\omega$. Let $\varphi_{k,n,R}:X\to [0,1]$ be a continuous function such that $C_{k,n,R}=\varphi^{-1}_{k,n,R}(0)$ and $X\setminus U_{k,n,R}=\varphi^{-1}_{k,n,R}(1)$, $k,n\in\omega$ and $R\subseteq S$ with $|R|=m_k$.

Now for every $n\in\omega$  we define a continuous map $g_n:X\to Z$,
$$
g_n(x)=\left\{\begin{array}{ll}
                         \lambda(\pi_R(h_n(x)),a,\varphi_{k,n,R}(x)), & 0\leq k\leq n, |R|=m_k, x\in U_{k,n,R} \\
                         a, & x\in X\setminus (\bigcup_{U\in \mathscr U_n}U),
                       \end{array}
 \right.
$$
where the function  $\lambda$ is defined by  (\ref{eq:7.1}).

Let us construct a sequence $(W_n:n\in\omega)$ of functionally open sets in $X^2$. For every $n\in\omega$ we consider a functionally closed set
$$
C_n=\bigsqcup_{0\leq k\leq n,\,|R|=m_k}C_{k,n,R}.
 $$
 For every  $x\in C_n$ we choose  $k\leq n$ and $R\subseteq S$ with $|R|=m_k$  such that  $x\in C_{k,n,R}$. Since the map $g_n$ is continuous, we can take a functionally open neighborhood $W_n(x)\subseteq U_{k,n,R}$ of  $x$ such that $|g_n(x')-g_n(x'')|_R\leq \frac{1}{n}$ for any $x',x''\in W_n(x)$. For every $x\in X\setminus C_n$ we put  $W_n(x)=X\setminus C_n$. Since $X$ is paracompact, there exists a locally finite refinement $(O_{\gamma,n}:\gamma\in \Gamma_n)$ of  $(W_n(x):x\in X)$ such that each $O_{\gamma,n}$ is functionally open (see \cite[Theorem 5.1.9]{Eng}). Not we put
 $$
 W_n=\bigcup_{\gamma\in \Gamma_n}O_{\gamma,n}\times O_{\gamma,n}
 $$
 and notice that $W_n$ is functionally open subset of $X^2$ as a locally finite union of functionally open sets.

Clearly,  $(W_n)_{n\in\omega}$ satisfies condition 1) of the Proposition.

We check condition 2). Fix  $x\in X$ and a sequence $(x_n)_{n\in\omega}$ of points  $x_n\in X$ such that $(x_n,x)\in W_{n}$ for all $n\in\omega$. Take $k\in\omega$ such that  $x\in A_k$. Let $R\subseteq S$ be a finite set with $|R|=m_k$ and $i\geq \max\{k,n_k\}$ be a number such that $x\in F_{k,i}\cap V_{k,R}=C_{k,i,R}$. Notice that  $x\in C_{k,j,R}$ for all  $j\geq i$. In particular, $x\in C_j$.

Let $n\geq i$. Since $(x_n,x)\in W_n$, there are $\gamma\in \Gamma_n$ and $y\in X$ such that $x,x_n\in O_{\gamma,n}\subseteq W_n(y)$. Notice that $y\in C_n$, because  $x\in C_n$. We choose $k'\leq n$ and $R'\subseteq S$ such that  $|R'|=m_{k'}$ and  $y\in C_{k',n,R'}$. Since $x\in W_n(y)\subseteq U_{k',n, R'}$, $x\in U_{k,n, R}$ and $\mathscr U_n$ is discrete, we have $k'=k$ and $R'=R$. Hence, $x,x_n\in W_n(y)\subseteq U_{k,n, R}$ and $|g_n(x)-g_n(x_n)|_R\leq \frac{1}{n}$.

Since $n\geq n_k$, condition~(\ref{conditionA}) implies that $\pi_R(h_n(x))=h_n(x)$. Since $x\in C_{k,n,R}$, $\varphi_{k,n,R}(x)=0$. It follows from the definition of $g_n$  that  $g_n(x)=\pi_R(h_n(x))=h_n(x)$. Moreover, since $x_n\in W_n(y)\subseteq U_{k,n,R}$, $\pi_R(g_n(x_n))=g_n(x_n)$.

Let $g(x)=(z_s)_{s\in S}$,  $g_n(x)=(z_{n,s})_{s\in S}$ and $g_n(x_n)=(w_{n,s})_{s\in S}$ for all $n\in\mathbb N$. If  $s\in S\setminus R$, then $w_{n,s}=z_{n,s}=a_s$ for all $n\geq i$. If  $s\in R$, then $|z_{n,s}-w_{n,s}|_s\leq \frac{1}{n}$ for all $n\geq i$. Therefore,
$$
\lim\limits_{n\to\infty}g_n(x_n)=\lim\limits_{n\to\infty}g_n(x)=\lim\limits_{n\to\infty}h_n(x)=g(x),
$$
which completes the proof.
\end{proof}

Now we need the following general construction of separately continuous maps with the given diagonal from \cite{MSF}.

\begin{theorem}\label{th:1.1}
Let  $X$ be a topological space, $Z$ be a Hausdorff space, $(Z_1,\lambda)$ be an equiconnected subspace of $Z$, $g:X\to Z$, $(G_n)_{n=0}^{\infty}$ and $(F_n)_{n=0}^{\infty}$ be sequences of functionally open sets $G_n$ and functionally closed sets $F_n$ in $X^2$,  let  $(\varphi_n)_{n=1}^{\infty}$ be a sequence of separately continuous functions $\varphi_n:X^2\to [0,1]$,  $(g_n)^{\infty}_{n=1}$ be a sequence of continuous mappings $g_n:X\to Z_1$ satisfying the conditions
\begin{enumerate}
  \item[1)] $G_0=F_0=X^2$ and $\Delta=\{(x,x):x\in X\}\subseteq G_{n+1}\subseteq F_n\subseteq G_n$ for every $n\in\mathbb N$;

  \item[2)] $X^2\setminus G_n\subseteq\varphi_n^{-1}(0)$ and $F_n\subseteq \varphi_n^{-1}(1)$ for every $n\in\mathbb N$;

  \item[3)] $\lim\limits_{n\to\infty}\lambda(g_n(x_n),g_{n+1}(x_n),t_n)=g(x)$ for arbitrary $x\in X$, any sequence $(x_n)^{\infty}_{n=1}$ of points $x_n\in X$ with $(x_n,x)\in F_{n-1}$ for all $n\in\mathbb N$, and any sequence  $(t_n)_{n=1}^{\infty}$ of points $t_n\in[0,1]$.
\end{enumerate}
Then the mapping $f:X^2\to Z$,
 \begin{equation*}
 f(x,y)=\left\{\begin{array}{ll}
                         \lambda(g_n(x),g_{n+1}(x),\varphi_n(x,y)), & (x,y)\in F_{n-1}\setminus F_n\\
                         g(x), & (x,y)\in E=\bigcap\limits_{n=1}^{\infty} G_n
                       \end{array}
 \right.
 \end{equation*}
is separately continuous.
\end{theorem}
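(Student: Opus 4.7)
The plan is to prove $f$ is separately continuous by stratifying $X^2$ along the nested sequence $F_n$ and treating the finite-level strata $R_n:=F_{n-1}\setminus F_n$ and the ``deep'' set $E$ separately. I will sketch continuity in the first variable; continuity in the second is handled by the same scheme and is a little simpler, since $y$ enters $f$ only through the separately continuous parameter $\varphi_n(x,y)$.

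Fix $y\in X$ and write $F_n^y=\{x:(x,y)\in F_n\}$ (closed), $G_n^y=\{x:(x,y)\in G_n\}$ (functionally open), $R_n^y=F_{n-1}^y\setminus F_n^y$, and $E^y=\bigcap_n F_n^y$. These sets partition $X$; on $R_n^y$ the slice $f(\cdot,y)$ is the continuous map $\psi_n^y(x)=\lambda(g_n(x),g_{n+1}(x),\varphi_n(x,y))$, and on $E^y$ it equals $g$. At a finite-level point $x_0\in R_n^y$, the inclusions $G_{l+1}\subseteq F_l$ together with the openness of $G_{l+1}^y$ and $x_0\in F_{n-1}^y\subseteq G_{l+1}^y$ for $l\le n-2$ force any sequence $x_k\to x_0$ into $F_l^y$ eventually, contradicting $x_k\in R_l^y$; since $F_n^y$ is closed this confines the level to $l_k\in\{n-1,n\}$ eventually. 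The case $l_k=n$ reduces to continuity of $\psi_n^y$. For $l_k=n-1$, continuity of $\varphi_{n-1}(\cdot,y)$ at $x_0\in F_{n-1}^y$ gives $\varphi_{n-1}(x_k,y)\to 1$ and $\psi_{n-1}^y(x_k)\to g_n(x_0)$ via $\lambda(a,b,1)=b$; meanwhile $x_k\notin F_{n-1}^y\supseteq G_n^y$ forces $\varphi_n(x_k,y)=0$, so $\varphi_n(x_0,y)=0$ by continuity, whence $f(x_0,y)=\psi_n^y(x_0)=g_n(x_0)$, matching the limit.

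At a deep point $x_0\in E^y$, condition~(3) is what makes things work. With the constant sequence $z_n\equiv x_0$ (valid as $(x_0,y)\in E\subseteq F_{n-1}$) and $t_n\equiv 0$, condition~(3) applied once with $x=y$ and once with $x=x_0$ (via $\Delta\subseteq F_{n-1}$) yields $g_n(x_0)\to g(y)$ and $g_n(x_0)\to g(x_0)$, so by Hausdorffness of $Z$ we get $g(y)=g(x_0)$ and $f(x_0,y)=g(x_0)$. For $x_k\to x_0$ the level $l_k\to\infty$ (every open $G_n^y$ captures $x_k$ eventually). To show $f(x_k,y)\to g(x_0)$, take any convergent subsequence, thin so that $l_k$ strictly increases, and build a witness sequence $(z_m)$ for condition~(3) with $(z_m,y)\in F_{m-1}$ by setting $z_{l_k}=x_k$ and padding the remaining indices with (say) $x_0$; condition~(3) with $t_m=\varphi_m(z_m,y)$ gives $\lambda(g_m(z_m),g_{m+1}(z_m),\varphi_m(z_m,y))\to g(y)=g(x_0)$, and reading the sub-subsequence $m=l_k$ forces the original subsequence of $(f(x_k,y))_k$ to tend to $g(x_0)$.

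The main obstacle is this last reindex-and-pad step: condition~(3) is stated with the sequence index aligned with the $F$-subscript, whereas $f(x_k,y)$ naturally uses the drifting level $l_k$, so the sequence has to be massaged to make condition~(3) bite. Once this is in place, continuity in the second variable is handled identically; the mild simplification is that applying condition~(3) with the constant sequence $z_n\equiv x_0$ and arbitrary $(t_n)$ upgrades to a uniform estimate $\lambda(g_n(x_0),g_{n+1}(x_0),t)\to g(x_0)$ in $t\in[0,1]$, which directly controls the $E^{x_0}$-boundary.
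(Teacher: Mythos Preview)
The paper does not prove this theorem; it is quoted from \cite{MSF} and used as a black box. So there is no in-paper argument to compare against, and I assess your sketch on its own merits.

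Your stratification into $R_n=F_{n-1}\setminus F_n$ and the core $E$ is the right one, and both cases are handled correctly in substance. At a finite-level point $x_0\in R_n^y$ the cleanest phrasing is to work in the open neighbourhood $G_{n-1}^y\setminus F_n^y$ of $x_0$, which lies in $R_{n-1}^y\cup R_n^y$; if moreover $x_0\in G_n^y$ then on the smaller neighbourhood $G_n^y\setminus F_n^y$ one has $f(\cdot,y)=\psi_n^y$, while if $x_0\notin G_n^y$ then $\varphi_n(x_0,y)=0$ directly and both $\psi_{n-1}^y$ and $\psi_n^y$ take the value $g_n(x_0)$ at $x_0$. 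Your derivation of $\varphi_n(x_0,y)=0$ ``by continuity'' along the $l_k=n-1$ subsequence is valid only when that subsequence is infinite; the dichotomy above avoids this wrinkle.

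The one genuine point to flag is that the whole argument is phrased via sequences $x_k\to x_0$, while $X$ carries no first-countability hypothesis, so sequential continuity of a slice does not a priori give continuity. The repair is painless. At finite levels the neighbourhood argument just described is already sequence-free. At $x_0\in E^y$, assume continuity fails: then for some open $W\ni g(x_0)$ and every $n$ there is $x_n\in G_n^y$ with $f(x_n,y)\notin W$; since $(x_n,y)\in G_n\subseteq F_{n-1}$ and $x_n\notin E^y$ (else $f(x_n,y)=g(x_n)=g(y)=g(x_0)$ by your double application of 3)), your reindex-and-pad construction applied to \emph{this} manufactured sequence contradicts condition~3). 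The observation $g(y)=g(x_0)$ whenever $(x_0,y)\in E$, obtained by applying 3) twice with the constant sequence, is exactly the right ingredient, and your remark that the second variable is simpler because 3) with the constant sequence upgrades to a uniform-in-$t$ statement is correct.
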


\begin{theorem}\label{th:7.4}
  Let $X$ be a paracompact space, $(Z_s)_{s\in S}$ be a family of equiconnected metrizable spaces  $(Z_s,\lambda_s)$, $a\in Z_S$ and $g\in {\rm B}_1(X,\boxdot_{s\in S}Z_s)$. Then there exists a separately continuous map  $f:X^2\to \boxdot_{s\in S}Z_s$ with the diagonal $g$.
\end{theorem}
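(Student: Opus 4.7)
My plan is to combine Proposition~\ref{pr:7.3} with Theorem~\ref{th:1.1}. The first step is to apply Proposition~\ref{pr:7.3} to $g$, obtaining continuous maps $g_n\colon X\to\boxdot_{s\in S}Z_s$ and functionally open sets $W_n\subseteq X^2$ with $\Delta=\{(x,x):x\in X\}\subseteq W_n$ and $\lim_{n\to\infty}g_n(x_n)=g(x)$ whenever $(x_n,x)\in W_n$ for all $n\in\omega$. Replacing $W_n$ by $\bigcap_{k\leq n}W_k$, I may assume that $(W_n)_{n\in\omega}$ is a decreasing sequence of functionally open neighbourhoods of $\Delta$.

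The second step is to construct functionally open sets $G_n$, functionally closed sets $F_n$, and continuous (hence separately continuous) functions $\varphi_n\colon X^2\to[0,1]$ meeting the hypotheses~(1) and~(2) of Theorem~\ref{th:1.1}, with the additional strengthening $F_n\subseteq W_{n+2}$ needed to later invoke Proposition~\ref{pr:7.3}. Paracompactness of $X$ plays the crucial role: for each $n$ and each $x\in X$ there is an open neighbourhood $U_x$ of $x$ with $U_x\times U_x\subseteq W_{n+2}$, and paracompactness gives a locally finite functionally open refinement $(U_{\gamma,n})_\gamma$ of $(U_x)_x$ together with a subordinate partition of unity $(\chi_{\gamma,n})_\gamma$. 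The function $\psi_n(x,y)=\sum_\gamma\chi_{\gamma,n}(x)\chi_{\gamma,n}(y)$ is then continuous on $X^2$, strictly positive on $\Delta$, and vanishes off $\bigcup_\gamma U_{\gamma,n}\times U_{\gamma,n}\subseteq W_{n+2}$; suitable level sets of scaled versions of $\psi_n$, chosen iteratively in $n$ so that the nesting $G_{n+1}\subseteq F_n\subseteq G_n$ holds, supply the required $G_n$, $F_n$, and $\varphi_n$.

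The third step is to verify condition~(3) of Theorem~\ref{th:1.1}. Fix $x\in X$ and a sequence $(x_n)$ with $(x_n,x)\in F_{n-1}$ for all $n$. Since $F_{n-1}\subseteq W_{n+1}\subseteq W_n$, Proposition~\ref{pr:7.3} gives $g_n(x_n)\to g(x)$, and applied to the once-shifted sequence (using $F_{n-2}\subseteq W_n$) it also gives $g_{n+1}(x_n)\to g(x)$. By Proposition~\ref{pr:7.1}, $\lambda$ is continuous on $\boxdot_{s\in S}Z_s\times\boxdot_{s\in S}Z_s\times[0,1]$ and $\lambda(z,z,t)=z$; combined with compactness of $[0,1]$ and Proposition~\ref{properties}(4), which places the tails of the two convergent sequences in a common metrizable subspace $\boxdot_T$ of the small box-product, this yields $\lambda(g_n(x_n),g_{n+1}(x_n),t_n)\to g(x)$ for any $(t_n)\subseteq[0,1]$. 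Theorem~\ref{th:1.1} then produces the required separately continuous $f\colon X^2\to\boxdot_{s\in S}Z_s$ with diagonal $g$.

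The main obstacle is the second step: because $X^2$ need not be normal, the nested functionally open/closed pairs $(G_n,F_n)$ and the interpolating $\varphi_n$ cannot be built by a direct Urysohn argument on $X^2$, and paracompactness of $X$ must be used to lift functional separation from $X$ to $X^2$ through locally finite partitions of unity supported in square neighbourhoods $U\times U$ of the diagonal.
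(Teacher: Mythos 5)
Your proposal is correct and follows essentially the same route as the paper: apply Proposition~\ref{pr:7.3} to obtain $(g_n)$ and $(W_n)$, use paracompactness of $X$ to interpolate nested functionally open/closed neighbourhoods $G_{n+1}\subseteq F_n\subseteq G_n$ of the diagonal inside $\bigcap_{k\le n+2}W_k$ together with the functions $\varphi_n$, and then invoke Theorem~\ref{th:1.1}. You merely supply more detail than the paper does on the partition-of-unity construction of $(G_n,F_n,\varphi_n)$ and on the verification of condition~3) of Theorem~\ref{th:1.1}, both of which the paper leaves implicit.
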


\begin{proof} We use Proposition~\ref{pr:7.3} and choose a sequence $(g_n)_{n\in\omega}$ of continuous maps $g_n:X\to \boxdot_{s\in S}Z_s$ and a sequence $(W_n)^{\infty}_{n=1}$ of functionally open subsets of $X^2$  which satisfy conditions 1) and 2) of Proposition~\ref{pr:7.3}. Let $G_0=F_0=X^2$. Paracompactness of $X$ implies that we can choose sequences  $(G_n)_{n=1}^{\infty}$ and $(F_n)_{n=1}^{\infty}$ of functionally open and functionally closed sets such that
$$
\{(x,x):x\in X\}\subseteq G_{n+1}\subseteq F_n\subseteq G_n\subseteq \bigcap\limits_{k=1}^{n+2}W_k
$$
for every $n\in\omega$. Now we take a sequence $(\varphi_n)_{n=1}^{\infty}$ of continuous functions $\varphi_n:X^2\to [0,1]$ with $X^2\setminus G_n=\varphi_n^{-1}(0)$ and $F_n= \varphi_n^{-1}(1)$ for every $n\in\mathbb N$. It remains to apply Theorem~\ref{th:1.1}.
\end{proof}

A topological space $X$ is {\it strongly countably dimensional} if there exists a sequence $(X_n)_{n=1}^{\infty}$ of sets $X_n\subseteq X$ such that $X=\bigcup_{n=1}^{\infty}X_n$ and ${\rm dim} X_n< n$ for every $n\in\mathbb N$, where by ${\rm dim\,}Y$ we denote the \v{C}ech-Lebesgue dimension of $Y$.

\begin{corollary}\label{cor:7.5}
Let  $X$ be a connected strongly countably dimensional metrizable space, $(Z_s)_{s\in S}$ be a family of metrizable equiconnected spaces $(Z_s,\lambda_s)$ and $g:X\to \Box_{s\in S}Z_s$.
Then the following conditions are equivalent:
\begin{enumerate}
\item[(i)] $g\in B_{1}(X,Z)$;

\item[(ii)] there exists a separately continuous map $f:X^2\to Z$ with the diagonal $g$.
\end{enumerate}
\end{corollary}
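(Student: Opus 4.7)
The plan is to prove each direction by reducing to a small box product $\boxdot_{s\in S}Z_s=\sigma(a)$ and then invoking an existing theorem. Throughout, metrizability of each $Z_s$ provides functional Hausdorffness (needed for Proposition~\ref{pr:7.0}(2)), and metrizability of $X$ supplies both paracompactness and first countability.

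For the implication (i) $\Rightarrow$ (ii), I would first show that $g$ actually takes values in some $\sigma(a)$. Writing $g=\lim_n g_n$ pointwise with each $g_n\colon X\to \Box_{s\in S}Z_s$ continuous, the connected image $g_n(X)$ lies in some $\sigma(c_n)$ by Proposition~\ref{pr:7.0}(2). Fixing $x_0\in X$ and setting $a=g(x_0)$, Proposition~\ref{properties}(4) applied to $g_n(x_0)\to a$ in the box topology produces some $N$ with $g_n(x_0)\in\sigma(a)$ for all $n\geq N$; hence $\sigma(c_n)=\sigma(a)$ and $g_n(X)\subseteq\sigma(a)$ for $n\geq N$. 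Since $\sigma(a)$ is closed in $\Box_{s\in S}Z_s$ by Proposition~\ref{properties}(1), passing to the pointwise limit gives $g(X)\subseteq\sigma(a)$. Because the subspace topology on $\sigma(a)$ inherited from $\Box_{s\in S}Z_s$ coincides with the topology defining $\boxdot_{s\in S}Z_s$, the map $g$ belongs to ${\rm B}_1(X,\boxdot_{s\in S}Z_s)$, and Theorem~\ref{th:7.4} then produces the required separately continuous $f\colon X^2\to\boxdot_{s\in S}Z_s\subseteq\Box_{s\in S}Z_s$.

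For (ii) $\Rightarrow$ (i), I would first apply the same reduction to $f$: each slice $f_y$ and $f^x$ is continuous, so by Proposition~\ref{pr:7.0}(2) we have $f_y(X)\subseteq\sigma(a_y)$ and $f^x(X)\subseteq\sigma(b_x)$; comparing values at a common point forces these $\sigma$-classes to agree, giving $f(X^2)\subseteq\sigma(a)$ for a single $a$. Thus $f$ becomes a separately continuous map into the equiconnected space $\boxdot_{s\in S}Z_s$ of Proposition~\ref{pr:7.1}, and it suffices to prove that its diagonal $g$ is Baire-one into $\boxdot_{s\in S}Z_s$, since the continuous inclusion $\boxdot_{s\in S}Z_s\hookrightarrow\Box_{s\in S}Z_s$ will then deliver the conclusion. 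For this step I would invoke the Lebesgue--Hahn type theorem (in the spirit of Theorem~6 of~\cite{KMS1}) asserting that on a strongly countably dimensional metrizable domain, diagonals of separately continuous maps into an equiconnected space are of Baire class one.

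I expect the second implication to be the main obstacle. The first is essentially a bookkeeping reduction to Theorem~\ref{th:7.4}: connectedness forces $g$ into a small box product, and paracompactness delivers the extension. In the second implication, the strong countable dimensionality of $X$ is genuinely needed to build continuous approximations of $g$; the standard approach combines finitely many values $f(\cdot,y)$ by means of the equiconnecting structure $\lambda$ of Proposition~\ref{pr:7.1} together with a suitable partition of unity, using the stratification $X=\bigcup_n X_n$ with $\dim X_n<n$ to control the approximation dimensionwise. The delicate point is that when $S$ is uncountable $\boxdot_{s\in S}Z_s$ need not be strongly $\sigma$-metrizable (Proposition~\ref{pr:7.2} applies only to the countable case), so the construction must operate intrinsically with the equiconnecting structure rather than through an ambient metric.
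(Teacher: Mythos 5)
Your proposal follows the paper's proof essentially verbatim: for (i)$\Rightarrow$(ii) the paper likewise applies Proposition~\ref{pr:7.0} to each $g_n(X)$ and uses convergence at a single point $x_0$ to force $\sigma(a_n)=\sigma(a)$ with $a=g(x_0)$, hence $g(X)\subseteq\sigma(a)$, and then invokes Theorem~\ref{th:7.4}; for (ii)$\Rightarrow$(i) it likewise collapses $f(X^2)$ into a single $\sigma(a)$ via Proposition~\ref{pr:7.0} and then cites an external classification result. The only cosmetic difference is that the paper quotes Corollary~5.4 of \cite{KMM} to conclude that $f$ itself is Baire-one on $X^2$ (whence so is its diagonal $g$), rather than a theorem phrased directly about diagonals, so the external result you gesture at --- and your concern about the uncountable small box product not being strongly $\sigma$-metrizable --- is discharged by that citation rather than by any new argument.
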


\begin{proof}
$(i)\Rightarrow (ii)$. Let $X\ne\emptyset$ and $(g_n)_{n=1}^{\infty}$ be a sequence of continuous maps $g_n:X\to \Box_{s\in S}Z_s$ which converges to $g$ pointwisely on $X$. According to Proposition \ref{pr:7.0}, for every $n\in\mathbb N$ there exists $a_n\in Z_S$ such that $g_n(X)\subseteq \sigma(a_n)$. We fix a point $x_0\in X$ and put $a=g(x_0)$. Since $\lim\limits_{n\to\infty}g_n(x_0)=g(x_0)$, there exists $n_0\in\mathbb N$ such that $g_n(x_0)\subseteq \sigma(a)$ for every $n\geq n_0$. Thus, $\sigma(a_n)=\sigma(a)$ for every $n\geq n_0$. Therefore, $g(X)\subseteq \sigma(a)$ and $g\in {\rm B_1}(X,\boxdot_{s\in S}Z_s)$. It remains to use Theorem~\ref{th:7.4}.

$(ii)\Rightarrow (i)$. It easy to see that according to Proposition \ref{pr:7.0}, for every separately continuous map  $f:X^2\to \Box_{s\in S}Z_s$ there exists $a\in Z_S$ such that $f(X^2)\subseteq \sigma(a)$. By Corollary 5.4 from \cite{KMM}  $f$ is a Baire-one map and, therefore, so is $g$.
\end{proof}

\begin{remark}
 {\rm
 \begin{enumerate}
   \item We use only paracompactness and connectedness of $X$ in the proof of implication $(i)\Rightarrow (ii)$.

   \item Properties of range spaces in all previous results concerning the construction of a separately continuous map with the given Baire-one diagonal allowed us to reduce the case of topological domain space $X$ to a metrizable one.

    \end{enumerate} }
\end{remark}

The last remark implies the following question.
%\newpage
\begin{question}
 Let $X$ be a topological space, $(Z_s)_{s\in S}$ be a family of equiconnected metrizable spaces  $(Z_s,\lambda_s)$, $a\in Z_S$ and $g\in {\rm B}_1(X,\boxdot_{s\in S}Z_s)$. Does there exist a separately continuous map  $f:X^2\to \boxdot_{s\in S}Z_s$ with the diagonal $g$?
\end{question}

\end{document}